\newcommand{\bbR}{\mathbb{R}}
\newtheorem{theorem}{Theorem}[section]
\newtheorem{claim}[theorem]{Claim}
\newtheorem{theorem?}[theorem]{Theorem?}
\newtheorem{lemma}[theorem]{Lemma}
\newtheorem{corr}[theorem]{Corollary}
\theoremstyle{definition}
\newtheorem{note}[theorem]{Note}
\newtheoremstyle{case}{}{2pt}{}{}{\scshape}{:}{ }{}
\theoremstyle{case}
\begin{document}
\title{Note on $\mathsf{TD} + \mathsf{DC}_\bbR$ implying $\mathsf{AD}^{L(\bbR)}$}

\author{Sean Cody}

\maketitle
\onehalfspacing
\section{Introduction}
There are two known proofs that $\mathsf{TD}+\mathsf{DC}_\bbR$ imply $\mathsf{AD}^{L(\bbR)}$ both due to Woodin, but currently no published accounts of either exist. The later proof involves proving the stronger result of Suslin determinacy from Turing determinacy + $\mathsf{DC}_\bbR$ directly~\cite{woodin:TDsus}. Combining that with Kechris and Woodin's theorem that Suslin determinacy in $L(\bbR)$ implies $\mathsf{AD}^{L(\bbR)}$~\cite{KWReflection}, the desired result becomes an immediate corollary.

Woodin's original proof uses an early version of the core model induction (CMI) technique. Through the work of many set theorists, the CMI has been developed into a proper framework for proving determinacy results from non-large cardinal hypotheses such as generic elementary embeddings, forcing axioms, and the failure of fine-structural combinatorial principles. The technique as it is understood in $L(\bbR)$-like models (i.e., $L(\bbR^g)$ where $\bbR^g$ are the reals of a symmetric collapse) can be seen as an inductive method by which one proves $J_\alpha(\bbR)\models\mathsf{AD}$ for all $\alpha$. An introduction to this as well as all terminology used in this paper can be found in Schindler and Steel's book~\cite{CMIBook}.

This paper aims to prove that $\mathsf{TD} + \mathsf{DC}_\bbR$ implies $\mathsf{AD}^{L(\bbR)}$ using modern perspectives on the core model induction in $L(\bbR)$. The key lemma is a modification of a well known theorem of Kechris and Solovay to work in the $\mathsf{TD}$ context. Utilizing the proof of the witness dichotomy it is sufficient to just prove the $J\mapsto M_1^{\#,J}$ step of the core model induction.
\subsection*{Acknowledgements}
The author would like to thank Woodin for giving permission to write up this result. He would also like to thank Schindler and Steel for reading over the proof and giving several helpful suggestions.
\section{Kechris-Solovay Theorem}
The following is our primary lemma which is a modification on a theorem of Kechris and Solovay~\cite{RSDH}.
\begin{lemma}\label{KS}
Assume $\mathsf{\mathsf{TD}}$. For any set of ordinals $S$, on a Turing cone $\mathcal C$ the following holds for $x\in \mathcal C$
$$L[S,x]\models\mathsf{OD}_{{S}}\text{-}\mathsf{\mathsf{TD}}$$
\end{lemma}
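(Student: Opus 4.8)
The plan is to argue by contradiction using two applications of $\mathsf{TD}$ in $V$, linked by a diagonal (self-referential) payoff set. First I would fix a set of ordinals $S$ and consider $B=\{x : L[S,x]\models\neg\,\mathsf{OD}_S\text{-}\mathsf{TD}\}$. Since $L[S,x]$ depends only on the Turing degree of $x$, the set $B$ is Turing invariant, so $\mathsf{TD}$ gives that either $B$ is disjoint from a cone---in which case that cone is the desired $\mathcal C$ and we are done---or $B$ contains a cone $\mathcal C_0$. I would assume the latter and aim for a contradiction.

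The next step is to extract a counterexample whose definition is uniform across the models $L[S,x]$. For $x\in\mathcal C_0$ the model $L[S,x]$ carries a canonical well-ordering of its $\mathsf{OD}_S$ sets of reals (by the Gödel index of a defining pair consisting of a formula and an ordinal tuple, using $S$ as the only parameter), so I may let $A_x$ be the $<$-least $\mathsf{OD}_S$ Turing-invariant set that is not cone-determined inside $L[S,x]$. The crucial point is that this description mentions only $S$ and not $x$, so there is a single formula $\psi(\cdot,S)$ with $A_x=\{w\in\bbR^{L[S,x]} : L[S,x]\models\psi(w,S)\}$ for every $x\in\mathcal C_0$; in particular each $A_x$ is genuinely $\mathsf{OD}_S$, and, being a counterexample, neither $A_x$ nor its complement contains a cone inside $L[S,x]$.

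Now I would diagonalize. Working in $V$, define the Turing-invariant set
$$D=\{z\geq_T x_0 : L[S,z]\models\psi(z,S)\},$$
that is, the set of $z$ lying in their own counterexample $A_z$. Turing invariance holds because $z\equiv_T z'$ gives $L[S,z]=L[S,z']$, hence $A_z=A_{z'}$, and this set is Turing invariant in the common model. By $\mathsf{TD}$ in $V$ one of $D,\bbR\setminus D$ contains a cone, with base $x_1\geq_T x_0$; say $D$ does, the other case being symmetric. The final step is a descent argument: if $z\in\bbR^{L[S,x_1]}$ and $z\geq_T x_1$, then $x_1\leq_T z$ gives $L[S,x_1]\subseteq L[S,z]$ while $z\in L[S,x_1]$ gives $L[S,z]\subseteq L[S,x_1]$, so $L[S,z]=L[S,x_1]$ and therefore $A_z=A_{x_1}$. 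Since $z\in D$ means $z\in A_z=A_{x_1}$, every real of $L[S,x_1]$ Turing above $x_1$ lies in $A_{x_1}$, so $A_{x_1}$ contains a cone inside $L[S,x_1]$, contradicting that $A_{x_1}$ is a counterexample.

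I expect the main obstacle to be the uniform definability in the second paragraph: one must verify that ``the $<$-least non-determined $\mathsf{OD}_S$ set'' is expressible by a single formula with $S$ as the only parameter and no dependence on $x$, since this is exactly what allows the same $\psi$ to be evaluated in the external model $L[S,z]$ and what makes the diagonal definition of $D$ legitimate. The only other delicate point is the absoluteness identity $L[S,z]=L[S,x_1]$ used to push the external cone down into $L[S,x_1]$; everything else is a routine double application of Turing determinacy.
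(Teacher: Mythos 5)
Your proposal is correct and follows essentially the same argument as the paper: extract the least $\mathsf{OD}_S$ counterexample $A_x$ in each $L[S,x]$ via the canonical $\mathsf{OD}_S$ well-ordering, diagonalize with the Turing-invariant set $\{z : z\in A_z\}$, apply $\mathsf{TD}$ a second time, and derive the contradiction from the identity $L[S,z]=L[S,x_1]$ (hence $A_z=A_{x_1}$) for reals $z\in L[S,x_1]$ above the cone's base. Your explicit attention to the uniformity of the defining formula $\psi(\cdot,S)$ is the point the paper compresses into the remark that $A_x$ depends only on the $S$-constructibility degree of $x$, so the two proofs are the same in substance.
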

\begin{proof}
Assume for a contradiction that there is no cone of reals $\mathcal{C}$ such that $L[S,x]\models\mathsf{OD}_{S}\text{-}\mathsf{\mathsf{TD}}$. Then we can define, on a cone, the map $x\mapsto A_x$ where $A_x$ is the least $\mathsf{OD}_{S}^{L[S,x]}$ $\equiv_T$-invariant subset of $\mathbb R$ which doesn't contain a Turing cone and whose complement does not contain a Turing cone in $L[S,x]$. Notice that $A_x$ only depends on the $S$-constructibility degree of $x$.\\[2mm]
It is clear from the last observation that the set $\{x\in\mathbb R\mid x\in A_x\}$ is $\equiv_T$-invariant and is well-defined on $\mathcal C$. Suppose that this set contains a Turing cone $\mathcal C'$. Consider some arbitrary $y\in \mathcal C\cap \mathcal C'$, if $w\geq_{T} z\geq_{T} y$ and $w\in L[S,z]$ then we have that $w\in A_w = A_z$. So $A_z$ contains a Turing cone in $L[S,z]$. We reach a similar conclusion if we assume that $\{x\in\mathbb R\mid x\not\in A_x\}$ contains a Turing cone. Contradiction.
\end{proof}
\noindent Ordinal (Turing) determinacy has the following well-known consequence :
\begin{corr}\label{KS2} $\mathsf{\mathsf{HOD}}_{{S}}^{L[S,x]}\models\omega_1^{L[S,x]}\text{ is measurable}$ for a cone of $x$.
\end{corr}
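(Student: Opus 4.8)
The plan is to strip the corollary down to its internal content and then run the classical Solovay push-forward of the Martin cone measure. By Lemma~\ref{KS} fix a Turing cone $\mathcal C$ so that $M:=L[S,x]\models\mathsf{OD}_S\text{-}\mathsf{TD}$ for every $x\in\mathcal C$. Since $\omega_1^{L[S,x]}=\omega_1^M$ and $\mathsf{HOD}_S^{L[S,x]}=\mathsf{HOD}_S^M$ by definition, it suffices to prove, inside an arbitrary model of $\mathsf{OD}_S\text{-}\mathsf{TD}$, the implication that $\mathsf{HOD}_S\models$ ``$\omega_1$ is measurable'', and then read it off inside each such $M$. From here I work in $M$ and write $\omega_1$ for $\omega_1^M$.

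Next I would build the measure. Let $\mathcal U$ be the Martin (Turing-cone) filter, $A\in\mathcal U$ iff $A$ contains a Turing cone; under $\mathsf{OD}_S\text{-}\mathsf{TD}$ it is an ultrafilter on the $\equiv_T$-invariant $\mathsf{OD}_S$ sets, and (amalgamating countably many base points into a single real via $\mathsf{DC}_\bbR$) it is countably complete. Let $f(x)=\omega_1^x$, the least ordinal not recursive in $x$. Then $f$ is $\equiv_T$-invariant, lightface $\mathsf{OD}$ (hence $\mathsf{OD}_S$), Turing-monotone, and its values are cofinal in $\omega_1$, since every countable ordinal is coded by a real $w$ with $\omega_1^w$ strictly above it. Push $\mathcal U$ forward along $f$: for $X\in\pow(\omega_1)\cap\mathsf{HOD}_S$ set $X\in\mu$ iff $f^{-1}[X]\in\mathcal U$.

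The verification that $\mu$ witnesses measurability then breaks into four routine checks. Each such $X$ is $\mathsf{OD}_S$ (its elements are ordinals, so it is ordinal definable from $S$), hence $f^{-1}[X]$ is $\equiv_T$-invariant and $\mathsf{OD}_S$; thus $\mathsf{OD}_S\text{-}\mathsf{TD}$ decides whether it or its complement contains a cone, so $\mu$ is an ultrafilter on $\pow(\omega_1)\cap\mathsf{HOD}_S$. Monotonicity and cofinality of $f$ make every tail $(\alpha,\omega_1)$ $\mu$-large and every bounded set $\mu$-null, so $\mu$ is nonprincipal. Countable completeness of $\mathcal U$ transfers to $\mu$, and since every $\gamma<\omega_1$ is countable this is exactly $\omega_1$-completeness. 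Finally the definition of $\mu$ is uniformly $\mathsf{OD}_S$ (from $S$ and the $\mathsf{OD}$ definition of $f$), so $\mu\cap\mathsf{HOD}_S\in\mathsf{HOD}_S$, and $\mathsf{HOD}_S\models$ ``$\mu$ is a nonprincipal $\omega_1$-complete ultrafilter on $\omega_1$''; applying this inside each $M=L[S,x]$ for $x\in\mathcal C$ gives the corollary.

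The main obstacle is the definability and internalization bookkeeping rather than any combinatorial difficulty. One must confirm that the preimages $f^{-1}[X]$ are genuinely $\mathsf{OD}_S$ and not merely $\mathsf{OD}_{S,x}$, so that the hypothesis actually in force (internal $\mathsf{OD}_S\text{-}\mathsf{TD}$) is what is being invoked; and one must check that completeness is witnessed \emph{inside} $\mathsf{HOD}_S$, i.e. that for a sequence $\langle X_\gamma:\gamma<\beta\rangle\in\mathsf{HOD}_S$ with $\beta<\omega_1$ the amalgamation of cone base points (which is carried out in $M$, not in $\mathsf{HOD}_S$) still yields $\bigcap_\gamma X_\gamma\in\mu$. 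This suffices because intersection is absolute and $\mu$ itself lies in $\mathsf{HOD}_S$, but it is the point where the argument must be stated with care.
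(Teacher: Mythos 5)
Your proposal is correct and follows essentially the same route as the paper: the paper's proof outline is exactly the ``standard construction of Martin's measure'' that you spell out, namely pushing the Turing-cone filter forward along $x\mapsto\omega_1^x$ restricted to $\mathsf{OD}_S$ sets of ordinals inside $L[S,x]$, and then checking that its trace on $\mathsf{HOD}_S^{L[S,x]}$ is a countably complete (indeed $\omega_1$-complete) nonprincipal ultrafilter from the point of view of $\mathsf{HOD}_S^{L[S,x]}$. Your additional care about where the amalgamation of cone bases happens (in $L[S,x]$, not in $\mathsf{HOD}_S$) and about $P(\omega_1)\cap\mathsf{HOD}_S = P(\omega_1)\cap\mathsf{OD}_S$ is precisely the verification the paper compresses into ``it is easy to see.''
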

\begin{proof}[Proof outline]
Working in $L[S,x]$, assume that $\mathsf{OD}_{S}\text{-}\mathsf{\mathsf{TD}}$ holds. We can do the standard construction of Martin's measure $\mu$ on $P(\omega_1^V)\cap\mathsf{OD}_S$. It is easy to see that $\mathsf{\mathsf{HOD}}^{L[S,x]}_{S}\cap\mu$ is a countably complete ultrafilter on $\omega_1^{L[S,x]}$ in $\mathsf{\mathsf{HOD}}^{L[S,x]}_S$.
\end{proof}
\begin{note}
The proofs of Theorem \ref{KS} and Corollary \ref{KS2} don't actually rely on any essential property of $L$ that is not shared by $L^J$ where $J$ is some (hybrid) model operator. Strictly speaking, the $L^J$ variants are what are used in Section 4.
\end{note}
\section[The Existence of Msharp1]{The Existence of $M_1^\#$}
The following consequences of $\Delta^1_2$-$\mathsf{\mathsf{TD}}$ are proven by modifying the analogous arguments from $\Delta^1_2$-$\mathsf{Det}$ in a similar fashion to the core argument of Theorem \ref{KS} then verifying that nothing breaks. As the modifications are relatively straightforward the proofs will not be included.
\begin{itemize}
\item (Martin) Assume $\Delta^1_2$-$\mathsf{TD + DC}$, then $\Pi^1_2\text{-}\mathsf{\mathsf{TD}}$.
\item (Kechris-Solovay) Assume $\Delta^1_2$-$\mathsf{TD + DC}$, then for any real $y$, on a Turing cone $x\in\mathcal C$ the following holds for $x\in \mathcal C$ 
$$L[x,y]\models\mathsf{OD}_{{y}}\text{-}\mathsf{\mathsf{TD}}$$

\item (Consequence of Kechris-Woodin~\cite{KWReflection}) Assume $\Delta^1_2$-$\mathsf{TD + DC}$ and $(\forall x\in\bbR)\: x^\#\text{ exists}$, then $\operatorname{Th}(L[x])$ is fixed on a Turing cone. 
\end{itemize}
We can utilize these three observations to prove the first step in our induction from a weaker hypothesis.
\begin{theorem} Assume $\Delta^1_2$-$\mathsf{TD + DC}$ and $(\forall x\in\bbR)\: x^\#\text{ exists}$, then $M_1^\#$ exists.
\end{theorem}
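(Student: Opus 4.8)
The plan is to argue by contradiction and to port the classical derivation of $M_1^\#$ from $\Delta^1_2$-determinacy into the Turing-determinacy setting, with the three displayed consequences of $\Delta^1_2$-$\mathsf{TD}+\mathsf{DC}$ playing the roles that $\Pi^1_2$-determinacy, the Kechris--Solovay theorem, and Kechris--Woodin reflection play there. So suppose $M_1^\#$ does not exist. Since $M_1^\#$ is a countable, $\Sigma^1_3$-absolute object, its failure passes down to $L[x]$ for every real $x$, and since every real has a sharp the relevant $K^c$-construction below a Woodin has enough room to converge. The aim is to show that the three tools force an inner model with a Woodin cardinal, contradicting this assumption and in fact producing $M_1^\#$ itself.

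First I would localize to a cone. By the Kechris--Woodin consequence (third bullet) $\operatorname{Th}(L[x])$ is eventually constant on a Turing cone $\mathcal C$, which lets me fix the first-order behaviour of $L[x]$ --- and hence of the core model computed inside it --- uniformly for $x\in\mathcal C$; this is the substitute for the $\Sigma^1_3$-correctness and reflection that boldface determinacy would otherwise hand us. On $\mathcal C$ I would apply Corollary \ref{KS2} (in the $L^J$-form flagged in the Note, with $J$ the relevant core-model operator) to obtain $\mathsf{HOD}^{L[x]}\models$ ``$\omega_1^{L[x]}$ is measurable'', the measure being Martin's cone measure restricted to the $\mathsf{OD}$ subsets of $\omega_1^{L[x]}$. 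The first bullet, $\Pi^1_2$-$\mathsf{TD}$, supplies the determinacy needed to carry out these constructions at the correct pointclass.

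With these in hand I would run the core-model dichotomy inside $L[x]$. Under the assumption that $M_1^\#$ fails, $L[x]$ computes the true core model $K^{L[x]}$ below a Woodin, which is rigid, fully iterable in $L[x]$, $\Sigma^1_3$-correct, and contained in $\mathsf{HOD}^{L[x]}$ because it is definable without real parameters. The measure on $\omega_1^{L[x]}$ then restricts to $K^{L[x]}$ and, by universality, its ultrapower embedding is an iteration of $K^{L[x]}$, so $\omega_1^{L[x]}$ is measurable in $K^{L[x]}$ --- and, by the cone-invariance of the previous step, uniformly so as $x$ ranges over $\mathcal C$. Diagonalizing these measures along the cone (again via Martin's measure) is what upgrades a single measurable to the full structure required: the resulting $\mathsf{OD}$-definable normal measure and its derived extenders cannot be absorbed by a core model below a Woodin, which either refutes the $\Sigma^1_3$-correctness of $K^{L[x]}$ or, read positively, verifies the $\Pi^1_2$ coarse-mouse-witness condition and hands back a lightface iterable premouse with a Woodin, i.e. $M_1^\#$. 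Either way the standing assumption is contradicted, and the lightface conclusion comes for free from the Turing-invariance of all the data.

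The main obstacle is precisely the core-model theory in the $\mathsf{TD}$ context. The existence, iterability, and $\Sigma^1_3$-correctness of $K^{L[x]}$ are normally underwritten by honest determinacy and by $\mathsf{DC}_\bbR$ in the branch-existence and $Q$-structure arguments, so the delicate point is to confirm that Turing determinacy together with $\mathsf{DC}$ and $\Pi^1_2$-$\mathsf{TD}$ recovers exactly these facts --- the analogue of the ``verifying that nothing breaks'' step already invoked for the three bullets. The second delicate point is the final upgrade: showing that the cone-uniform measurability of $\omega_1^{L[x]}$ genuinely transcends ``below a Woodin'' rather than stalling at a measurable, which is where the bookkeeping of the $\mathsf{OD}$-definable measures and the coarse-mouse-witness analysis must be checked to survive the passage from determinacy to Turing determinacy unchanged.
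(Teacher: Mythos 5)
There is a genuine gap, and it sits exactly where you flag your ``second delicate point.'' Your proposal assembles the right ingredients (localizing to a cone, applying Corollary~\ref{KS2} to get that $\omega_1^{L[x]}$ is measurable in $\mathsf{HOD}^{L[x]}$, and noting $K^{L[x]}\subseteq \mathsf{HOD}^{L[x]}$ by definability), but the mechanism that turns these into a contradiction is missing. Measurability of $\omega_1^{L[x]}$ in $\mathsf{HOD}^{L[x]}$, or even in $K^{L[x]}$, is not by itself in tension with ``no inner model with a Woodin'': the core model below a Woodin can perfectly well contain measurable cardinals, so no amount of ``diagonalizing these measures along the cone'' produces a Woodin cardinal or refutes anything. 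Your intermediate step (``by universality the ultrapower embedding is an iteration of $K$, so $\omega_1^{L[x]}$ is measurable in $K$'') is both unnecessary and unsupported, and the final step (``the resulting measures cannot be absorbed by a core model below a Woodin'') is an assertion, not an argument.

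The paper's actual engine is \emph{weak covering plus a generic collapse}, and it never needs to positively ``upgrade'' a measurable to a Woodin. One runs the $K^c$ construction inside $L[x]$ below the least $x$-indiscernible $i_0$ (the measure on $i_0$ coming from $x^\#$ is what licenses the core model theory there), and supposes it does not reach $M_1^\#$, so $K^{L[x]}$ exists. By weak covering, for $U$-measure one many $\alpha<i_0$ one has $(\alpha^+)^K=(\alpha^+)^{L[x]}$. Fix such an $\alpha$ and let $z=\langle x,g\rangle$ with $g$ generic for $Coll(\omega,\alpha)$; then $\omega_1^{L[z]}=(\alpha^+)^K$ is a \emph{successor} cardinal in $K=K^{L[z]}$ (using generic absoluteness of $K$). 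But $z\geq_T x$, so Corollary~\ref{KS2} applies in $L[z]$: $\omega_1^{L[z]}$ is measurable, hence inaccessible, in $\mathsf{HOD}^{L[z]}$, and since $K\subseteq \mathsf{HOD}^{L[z]}$ this makes $\omega_1^{L[z]}$ simultaneously a successor cardinal and inaccessible there --- a contradiction in cardinal arithmetic, not a construction of a Woodin. This collapse trick (arranging that $\omega_1$ of the new model \emph{is} a $K$-successor) is the idea your proposal needs and does not contain; once one has it, the conclusion that $M_1^{\#,L[x]}$ exists follows, and its definability gives constancy on a cone and hence the real $M_1^\#$.
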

\begin{proof}
Utilizing we have that $L[x]\models\mathsf{OD}\text{-}\mathsf{\mathsf{TD}}$ on a Turing cone. Let $x$ be the base of such a cone and fix the least $x$-indiscernible $i_0$. We can run the $K^c$ construction in $L[x]$ below $i_0$. Suppose that the $K^c$ construction in $L[x]$ does not reach $M_1^\#$ so $K^L[x]$ exists. Letting $U$ be the ultrafilter on $i_0$ from $x^\#$, then for a $U$-measure one set of $\alpha< i_0$ we have that $L[x]\models (\alpha^+)^K = \alpha^+$. Select such an $\alpha$ and let $z = \left<x,g\right>$ where $g$ is $L[x]$-generic for $Coll(\omega,\alpha)$. Working in $L[z]$, $K$ exists, is inductively definable, and $\omega_1$ is a successor cardinal in $K$. As $z\geq_T x$ we have that $L[z]\models\mathsf{OD}\text{-}\mathsf{\mathsf{TD}}$, therefore $HOD^{L[z]}\models\omega_1\text{ is measurable}$. But as $K^{L[z]}\subseteq HOD^{L[z]}$ we have a contradiction.\\[2mm]
Therefore we have that $M_1^{\#^{L[x]}}$ exists. As $M_1^\#$ is definable it's constant on a cone of $x$, this then gives us the real $M_1^\#$.
\end{proof}
\section[The J maps to MsharpJ1 step]{The $J\mapsto M^{\#,J}_1$ Step}
Following this point on the argument is identical to that of Steel and Schindler. But I will rewrite it (practically verbatim) for the sake of completeness. For the rest of this argument we will assume $\mathsf{ZF} + \mathsf{TD} + \mathsf{DC} + V = L(\bbR)$. Note that every (hybrid) model operator considered in the core model induction relativizes well and determines itself on generic extensions.

\begin{theorem}\label{core model}
Let $a\in\mathbb{R}$ and let $J$ be a (hybrid) model operator that relativizes well and determines itself on generic extensions, and suppose that $$W_x = \left(K^{c,J}(a)\right)^{\mathsf{\mathsf{HOD}}_{a}^{L^J[x]}}$$
constructed with height $\omega_1^V$ exists for a cone of $x$. Then there is a cone of $x$ such that $W_x$ cannot be $\omega_1^V+1$ iterable above $a$ inside $\mathsf{\mathsf{HOD}}_{a}^{L^J[x]}$
\end{theorem}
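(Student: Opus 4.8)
The plan is to argue by contradiction, reproducing in the hybrid setting the argument that $M_1^\#$ exists, now relativized to $J$ and $a$ and carried out inside $\mathsf{HOD}_a^{L^J[x]}$. Suppose the theorem fails. The set of $x$ for which $W_x$ is $\omega_1^V+1$ iterable above $a$ inside $\mathsf{HOD}_a^{L^J[x]}$ is $\equiv_T$-invariant, since $x\equiv_T y$ gives $L^J[x]=L^J[y]$ and hence $W_x=W_y$; so by $\mathsf{TD}$ there is a cone $\mathcal C$ on which $W_x$ is $\omega_1^V+1$ iterable above $a$ inside $\mathsf{HOD}_a^{L^J[x]}$. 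In the ambient core model induction one has already reduced to the case that no $\omega_1^V+1$ iterable $(J,a)$-mouse with a Woodin cardinal exists, so such an iterable $W_x$ has no Woodin and is therefore the hybrid core model $K^J(a)$ as computed in $\mathsf{HOD}_a^{L^J[x]}$. Because $J$ relativizes well and determines itself on generic extensions, the iteration strategy witnessing this is unique and hence $\mathsf{OD}_a$; in particular $W_x\subseteq\mathsf{HOD}_a^{L^J[x]}$ and $K^J(a)$ is generically absolute.

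Next I would produce a measurable cardinal in $\mathsf{HOD}_a^{L^J[x]}$. Using the $L^J$-variant of Lemma~\ref{KS} promised in the Note following Corollary~\ref{KS2}, I shrink $\mathcal C$ so that in addition $L^J[x]\models\mathsf{OD}_a\text{-}\mathsf{TD}$ for every $x\in\mathcal C$. The relativized form of Corollary~\ref{KS2} then runs Martin's measure construction inside $L^J[x]$ and yields $\mathsf{HOD}_a^{L^J[x]}\models$``$\omega_1^{L^J[x]}$ is measurable''. This is precisely the analogue of the step giving a measurable $\omega_1$ in $\mathsf{HOD}$.

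The third step is the collapse, exactly as in the $M_1^\#$ proof. By weak covering for $K^J(a)$ there are cofinally many uncountable cardinals $\alpha$ of $L^J[x]$ below $\omega_1^V$ with $(\alpha^+)^{K^J(a)}=(\alpha^+)^{L^J[x]}$; fix such an $\alpha$ and put $z=\langle x,g\rangle$, where $g$ is $L^J[x]$-generic for $Coll(\omega,\alpha)$, which exists as a real since $\alpha<\omega_1^V$ is countable in $V$. Then $z\ge_T x$, so $z\in\mathcal C$; since $J$ determines itself on generic extensions, $K^J(a)$ is computed identically in $\mathsf{HOD}_a^{L^J[z]}$, and $\omega_1^{L^J[z]}=(\alpha^+)^{L^J[x]}=(\alpha^+)^{K^J(a)}$ is now a \emph{successor} cardinal of $K^J(a)$. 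Applying the second step to $z$ in place of $x$ gives $\mathsf{HOD}_a^{L^J[z]}\models$``$\omega_1^{L^J[z]}$ is measurable'', so $\omega_1^{L^J[z]}$ is inaccessible, and hence a \emph{limit} cardinal, in $\mathsf{HOD}_a^{L^J[z]}$ and therefore also in $K^J(a)\subseteq\mathsf{HOD}_a^{L^J[z]}$. This contradicts the previous sentence and proves the theorem.

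The determinacy bookkeeping is routine once the $L^J$-variants of Lemma~\ref{KS} and Corollary~\ref{KS2} are granted, so I expect the main obstacle to lie in transporting the core-model machinery to the hybrid operator. Two points need care. First, that $\omega_1^V+1$ iterability really does pin $W_x$ down as $K^J(a)$ with a \emph{unique} strategy, so that the strategy is $\mathsf{OD}_a$, is absorbed into $\mathsf{HOD}_a^{L^J[x]}$, and remains correct after the $Coll(\omega,\alpha)$ extension used in the third step; this is exactly where ``relativizes well'' and ``determines itself on generic extensions'' are used. Second, weak covering for $K^J(a)$ in the $J$-relativized context, which underwrites the successor-cardinal computation — and here one must also match the value $(\alpha^+)^{K^J(a)}$, computed inside $\mathsf{HOD}_a^{L^J[x]}$, against $(\alpha^+)^{L^J[x]}$, the successor that $Coll(\omega,\alpha)$ over $L^J[x]$ actually turns into $\omega_1^{L^J[z]}$. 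Checking that $J$'s good behaviour is exactly enough to secure both points is where the real work lies.
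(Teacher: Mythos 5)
Your proposal is correct and follows essentially the same route as the paper's own proof: negate the statement, use Turing-invariance and $\mathsf{TD}$ to get a cone of iterability, isolate the true core model $K^J(a)$ inside $\mathsf{HOD}_a^{L^J[x]}$ with $\omega_1^{L^J[x]}$ measurable there via the $L^J$-variants of Lemma~\ref{KS} and Corollary~\ref{KS2}, pick $\alpha$ by covering with $(\alpha^+)^{K^J(a)}=(\alpha^+)^{L^J[x]}$, collapse it to $\omega$, and derive the successor-versus-limit cardinal contradiction from $K^J(a)\subseteq\mathsf{HOD}_a^{L^J[z]}$. The two points you flag as ``the real work'' are handled in the paper exactly as you anticipate: the matching of $(\alpha^+)$ values uses that $L^J[x]$ is a size $<\omega_1^V$ (Vop\v{e}nka) forcing extension of $\mathsf{HOD}_a^{L^J[x]}$, and the membership $K^J(a)\in\mathsf{HOD}_a^{L^J[z]}$ follows from the $Def(W,S)$ thick-class characterization of $K^J$ in the generic extension.
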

\begin{proof}
Assume for a contradiction that this is not the case. Then there is an cone $\mathcal{C}$ such that for all $x\in\mathcal C$ we have $L^J[a,x] = L^J[x]$ and
$$\omega_1^{L^J[x]}\text{ is measurable in }\mathsf{\mathsf{HOD}}_{a}^{L^J[x]}$$
and so we can isolate $K^{J}_x = \left(K^{ J}(a)\right)^{\mathsf{\mathsf{HOD}}_{a}^{L^J[x]}}$. Fixing some $x\in\mathcal C$ we will write $K^{ J}$ for $K^{ J}_x$. By cheapo covering and the fact that $L^J[x]$ is a size $<\omega_1^V$ forcing extension of $\mathsf{HOD}^{L^J[x]}_{a}$, we can choose some $\lambda < \omega_1^V$ such that 
$$\lambda^{+K^{ J}} = \lambda^{+L^J[x]}$$
Let $g\in V$ be $Col(\omega,\lambda)$-generic over $L^J[x]$ and let $y\in V$ be a real coding $(g,x)$. Then
$$\omega_1^{L^J[y]} = \lambda^{+K^{J}} = \lambda^{+L^J[x]}$$
As $y\in \mathcal C$ we have that
$$\omega_1^{L^J[y]}\text{ is measurable in }\mathsf{\mathsf{HOD}}^{L^J[y]}_{a}$$
We get a contradiction if we can show that
\begin{claim}
$K^{ J}\in\mathsf{\mathsf{HOD}}^{L^J[y]}_{a}$ 
\end{claim}
\begin{proof}
$K^{ J}$ is still fully iterable inside $L^J[y] = L^J[x][g]$. This means that $K^{ J}$ is the core model above $a$ of $L^J[y]$ in the sense that it is the common transitive collapse of $Def(W,S)$ for any $W,S$ such that $W$ is an $\omega_1^V$-iterable $ J$-weasel and $\omega_1^V$ is $S$-thick. This characterization demonstrates that $K^{ J}\in\mathsf{\mathsf{HOD}}^{L^J[y]}_{a}$.
\end{proof}
\end{proof}
Utilizing this theorem we wish to show $\forall\alpha\;W^*_\alpha$. Suppose that for some fixed critical $\alpha$, $W_\alpha^*$ holds. We wish to show that $W^*_{\alpha+1}$ holds. By the witness dichotomy (this is where we require $\mathsf{\mathsf{DC}}$) this means that we need to see that for all $n<\omega$, $J^n_\alpha$ is total on $\mathbb{R}$. Suppose that $\bbR$ is closed under $J = J^n_\alpha$. We need to see that $\mathbb{R}$ is closed under $J^{n+1}_\alpha$.\\[2mm]
Let us fix $a\in\bbR$. By Theorem \ref{core model}, for any (hybrid) model operator $J$ which relatives well and determines itself on generic extensions there is a cone of $b$ on which $$W_b = \left(K^{c, J}(a)\right)^{\mathsf{\mathsf{HOD}}_{a}^{L^J[b]}}$$
cannot be $\omega_1^V+1$ iterable inside $\mathsf{\mathsf{HOD}}_{a}^{L^J[b]}$. This means we can define the map
$$b\mapsto J_\alpha^{n+1}(a)^b$$
on a cone where $J^{n+1}_\alpha(a)^b$ is a version of $J_\alpha^{n+1}(a)$ from the point of view of $\mathsf{\mathsf{HOD}}_{a}^{L^J[b]}$. Note that in particular, $J^{n+1}_\alpha(a)$ is $\omega_1+1$ iterable in $\mathsf{\mathsf{HOD}}_{a}^{L^J[b]}$.\\[2mm]
Consider the map $f:\mathcal D\to\mathbb{R}$ given by
$$[b]\mapsto\text{a canonical real code for }J^{n+1}_\alpha(a)^b$$
By $\mathsf{\mathsf{TD}}$, for each $n<\omega$ the set $\{b\in\mathbb{R} : n\in f([b])\}$ either contains a cone or is disjoint from a cone. Let $n\in\mathcal P$ iff $n\in f([b])$ on a cone. Then $f([b]) = \mathcal{P}$ on a cone.\\[2mm]
One can see that $\mathcal P$ is $\omega_1$ iterable in $V$ : if $\mathcal T$ is a countable tree on $\mathcal P$ of limit length, then the good branch through $\mathcal T$ is the one picked by the strategies of $\mathsf{\mathsf{HOD}}_{a}^{L^J[b]}$ on a cone. We therefore have found our desired $J^{n+1}_\alpha$.
\begin{note} In the end-of-gaps case we also need to show that the map $J\mapsto M^{\#,J}_0$ is well-defined. A slightly simplified version of the above argument can be made to show that.
\end{note}
\bibliographystyle{unsrt}
\bibliography{TD_draft}
%
%
%

%
%
%
%
%
\end{document}